\def\doi{8(4:15)2012}
\newtheorem{claim}{Claim}
\newtheorem{definition}{Definition}
\newtheorem{theorem}{Theorem}
\newtheorem{lemma}{Lemma}
\newtheorem{corollary}{Corollary}
\newcommand{\IC}{\mathbb{C}}
\newcommand{\ID}{\mathbb{D}}
\newcommand{\IN}{\mathbb{N}}
\newcommand{\IR}{\mathbb{R}}
\newcommand{\IZ}{\mathbb{Z}}
\newcommand{\Dyn}{\mathbb{Y}}
\newcommand{\IY}{\Dyn}
\newcommand{\intervall}{\mathbb{I}}
\newcommand{\normed}{{\mathcal{N}}}
\newcommand{\dom}{\mbox{dom}}
\begin{document}

\title[On Computable Approximations of Landau's Constant]{On Computable Approximations of Landau's Constant}

\author[R.~Rettinger]{Robert Rettinger}	%required
\address{FernUniversit\"at Hagen\\
LG Komplexit\"at und Algorithmen\\
Universit\"atsstrasse 1\\ D-58095 Hagen }	%required
\email{robert.rettinger@fernuni-hagen.de}  %optional
%\thanks{thanks 1, optional.}	%optional

%% etc.

%% required for running head on odd and even pages, use suitable
%% abbreviations in case of long titles and many authors:

%% mandatory lists of keywords and classifications:
\keywords{computability, algorithm, Landau constant}
\subjclass{G.1.0}

\begin{abstract}
We present an algorithm which computes the Landau constant up to any given precision.
\end{abstract}

\maketitle

\section{Introduction}
For most of the mathematical constants used today, like $\pi$, $e$ or $\zeta$, very efficient 
algorithms to approximate these constants are known. There are, however, rare cases such as Bloch's, Landau's or the Hayman-Wu constant, 
where conjectures and rough bounds on such constants are known but the exact value is unknown. Recently, an algorithm to approximate
Bloch's constant was given in \cite{rett08}. We continue this line of research and present a similar algorithm to approximate Landau's constant up
to any precision. 

Because of the close connection between the Landau and Bloch constants we can reuse many of the ideas (and motivations) 
on the computation of the Bloch constant in this paper. A core component of the algorithm, namely the computation of maximum discs in the
image of a (normalised) holomorphic function, is totally different. So injectivity does not play any role any longer. What is more, however,
homotopy methods used in the algorithm for Bloch's constant do not work any longer. This is due to the fact that for discs which need not be schlicht,
several overlapping parts of the pre-image come into play. To decide whether the images do indeed overlap is in general not even computable. 
Though the ideas of the algorithm are inspired by Type-2 theory of effectivity, we formulate our results totally independently of this theory.
In this way we hope that our result is accessible to a wider audience. Finally, we improve even on the
parts which could be taken literally from the algorithm for Bloch's constant to narrow the gap between the theory and possible implementations.

Landau's constant (see \cite{lan29}) gives a quantitative version of the fact that non-constant holomorphic functions are open.
More precisely, it states that for any $r>0$ and any holomorphic function $f$ defined on
a disc $\ID_r(z_0):=\{ z\in\IC\mid |z-z_0|<r\}$ with $f'(z_0)\not=0$ there exists a disc of
radius $|f'(z_0)|\cdot r\cdot c$ inside the image $f(\ID_r(z_0))$, where the constant $c>0$ does not depend on $f$! 
Obviously $c$ is bounded from above; thus the supremum, the so called Landau constant $\lambda$, exists. The best upper 
bound known for $\lambda$,
\[ \lambda \leq \frac{\Gamma(\frac{1}{3})\cdot\Gamma(\frac{5}{6})}{\Gamma(\frac{1}{6})},
\]
is at the same time conjectured to be the exact value of
$\lambda$. However the best lower bound known so far is
\[ \frac{1}{2}< \lambda .
\]
(see \cite{rob38}\ and \cite{rad43}).

Putting this in decimal representation gives
\[0.5<\lambda\leq 0.54325...\]
i.e. all we know is the constant up to $4\cdot 10^{-2}$.

In this paper we will give an algorithm to compute Landau's constant $\lambda$ up to any precision in
the sense that on input $n\in\IN$ some rational number $q$ with
$|q-\lambda|<2^{-n}$ can be computed.

The main idea of our algorithm is to compute for several normalised functions the corresponding
$\lambda$-values. Following the definition, it seems that we have to take the infimum for all normalised
functions, which could not be done in finitely many steps. We will overcome this problem by a compactness argument in Section 5. 
In the next section we recall a few notations and fix the machine model which we will use throughout this paper. 
In Section 3  we introduce a subset of all holomorphic functions such that $\lambda$ is already determined by this smaller, compact class.
In Section 4 we will then show how $\lambda_f$ of a single function can be approximated and, finally, in Section 5 we prove the computability
of Landau's constant.

\section{Preliminaries}\label{sec_prel}

We begin this section with a few remarks on the model underlying our algorithms. Our main algorithm (Section 5) will only compute on finite words, where we assume some straightforward, standard encodings (representations) of the dyadic numbers, i.e. numbers of the form $m\cdot 2^n$ with $m,n\in\IZ$, by such words. We do not need any
operations on infinite words or other structures like $\IR$ or $\IC$, and therefore our algorithms could be implemented by classical Turing machines.
If some value is given as input to our machine, we also say that the machine {\it computes}\ something {\it on}\ this value. 
In addition, to simplify things, we use a second model for intermediate results where we allow infinite sequences as input. 
One can think of this model as a classical Turing machine which can in addition ask some kind of oracle for single elements of the input. The output, however, will always be a finite word which will be returned after a finite number of steps. 
This second model is only used for simplification reasons, and our final algorithm to compute Landau's constant
will not depend on this model. 

Let $\ID_r(z_0):=\{ z\in\IC\mid |z-z_0|<r\}$ denote the open disc of radius $r$ with
centre $z_0$ and let $\overline{\ID}_r(z_0)$ denote its topological closure, i.e. $\overline{\ID}_r(z_0):=\{ z\in\IC\mid |z-z_0|\leq r\}$. To simplify notation we use $\ID_r:=\ID_r(0)$ and $\ID=\ID_1$. A
{\it normalised (holomorphic) function}\ on a domain $D$, $0\in D$, is a holomorphic function $f$ with $f'(0)=1$. The space of normalised functions on $D$ is denoted by $\normed(D)$.
For given domain $A\subseteq \IC$ we denote the radius of the largest disc in $A$ by $l(A)$, i.e.
\[l(A)=\sup \{ r\mid \exists z. \ID_r(z)\subseteq A\}.\] 
Given a holomorphic function $f$ with $D\subseteq\dom(f)$ let $\lambda_f(D)$ denote the radius of the largest disc in $f(D)$, i.e. \[\lambda_f(D)= l(f(D)).\] Finally let $\lambda_f:=\lambda_f(\ID)$ and the {\it Landau constant}\ {$\lambda$} be the infimum of all $\lambda_f$ with $f\in\normed(D)$. 
Obviously, $\lambda$ is already the infimum of all $\lambda_f$ for normalized $f$ with the additional condition $f(0)=0$.

Throughout this paper we use most of the time only very basic results of complex analysis covered by most textbooks (see e.g. \cite{ahl66}).
The only somewhat more advanced result from complex analysis is Koebe's $1/4$ Lemma (see e.g. \cite{rud87}):

\begin{theorem}[Koebe's $1/4$ Lemma]\label{theo_koebe}
Let $f$ be an injective holomorphic function on some disc $\ID_r$. Then $\ID_{r\cdot |f'(0)|\cdot 1/4}(f(0))\subseteq f(\ID_r)$.  
\end{theorem}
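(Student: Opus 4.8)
The plan is to deduce this scaled statement from the classical Koebe theorem and to prove the latter through the Bieberbach coefficient estimate. First I would reduce to the normalized situation: given an injective holomorphic $f$ on $\ID_r$, put
\[ g(z) = \frac{f(rz) - f(0)}{r\, f'(0)} \qquad (z \in \ID). \]
Then $g$ is injective and holomorphic with $g(0) = 0$ and $g'(0) = 1$, and an inclusion $\ID_{1/4} \subseteq g(\ID)$ transports back under this affine substitution to exactly $\ID_{r\,|f'(0)|/4}(f(0)) \subseteq f(\ID_r)$. So it suffices to prove: every univalent $g(z) = z + a_2 z^2 + a_3 z^3 + \cdots$ on $\ID$ satisfies $\ID_{1/4} \subseteq g(\ID)$.

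The key ingredient is Bieberbach's bound $|a_2| \le 2$. I would obtain it from the area theorem: if $\varphi(\zeta) = \zeta + b_0 + b_1 \zeta^{-1} + b_2 \zeta^{-2} + \cdots$ is univalent on $\{\zeta : |\zeta| > 1\}$, then $\sum_{n \ge 1} n\,|b_n|^2 \le 1$; this follows by expressing the nonnegative area of the complement of $\varphi(\{|\zeta| > 1\})$ as a boundary integral via Green's theorem. To apply it to $g$, I pass to the odd square-root transform $h(z) = \sqrt{g(z^2)} = z + \frac{a_2}{2} z^3 + \cdots$, a single-valued univalent branch since $g$ has a simple zero at the origin, and then invert: $\varphi(\zeta) = 1/h(1/\zeta) = \zeta - \frac{a_2}{2}\,\zeta^{-1} + \cdots$ is univalent on $\{|\zeta| > 1\}$, so the area theorem forces $|a_2/2| \le 1$.

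Finally I turn to Koebe's estimate itself. Suppose $w_0 \notin g(\ID)$. Then
\[ k(z) = \frac{w_0\, g(z)}{w_0 - g(z)} \]
is again univalent on $\ID$ with $k(0) = 0$, $k'(0) = 1$, and a short expansion gives $k(z) = z + (a_2 + w_0^{-1})\,z^2 + \cdots$. Applying $|a_2| \le 2$ both to $g$ and to $k$ yields $|w_0^{-1}| \le |a_2 + w_0^{-1}| + |a_2| \le 4$, hence $|w_0| \ge 1/4$. Thus every omitted value has modulus at least $1/4$, i.e. $\ID_{1/4} \subseteq g(\ID)$, as required.

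I expect the main obstacle to be the two univalence verifications that are easy to state but require care: that $h(z) = \sqrt{g(z^2)}$ is a genuine single-valued injective function on $\ID$, and that $k$ above is injective with the claimed normalization. The area computation and the coefficient bookkeeping are routine once these are settled.
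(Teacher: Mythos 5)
The paper does not prove this statement; it is quoted as a known classical result with a pointer to Rudin's textbook. Your argument is correct and is precisely the standard proof found in such references: reduction to the normalized class, Bieberbach's bound $|a_2|\le 2$ via the area theorem applied to the odd square-root transform, and the omitted-value composition. The two univalence checks you flag do go through: injectivity forces $g$ to vanish only at the origin, so $g(z^2)/z^2$ is non-vanishing on $\ID$ and admits a holomorphic square root equal to $1$ at $0$, making $h$ an odd univalent function (if $h(z_1)=h(z_2)$ then $z_1^2=z_2^2$, and $z_1=-z_2$ would force $h(z_2)=0$, hence $z_1=z_2=0$); and $k$ is the post-composition of $g$ with an injective M\"obius map whose pole $w_0$ is omitted by $g$, hence holomorphic and univalent with the stated expansion. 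Nothing is missing.
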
 

The representations of the objects we use throughout this paper are introduced next, where
we implicitly use some kind of efficiently computable pairing function without further mention.

Let $\Dyn$ denote the class of dyadic numbers, i.e. \[\Dyn := \{ m\cdot 2^{n}\mid n,m\in\IZ\}.\] We could use rational numbers as well but having implementations in
mind, we stick to the efficiently implementable dyadic numbers.
Identifying complex numbers $z=x+\iota\cdot y$ and pairs $(x,y)$, where we denote the imaginary unit by $\iota$,  we can represent
elements of the set $\Dyn[\iota]$ of complex dyadics simply by pairs of dyadic numbers.
In this way, it should be clear that there are efficient algorithms to approximate operations like $\cdot,/,+,-$ on the complex dyadics,
in the sense that, given complex dyadics $z$, $z'\not=0$ and some $n\in\IN$, we can easily compute a dyadic number $y$ such that
$|y-z/z'|<2^{-n}$ etc. 
To simplify notation, we say that we can {\it compute a value $z$ up to precision $2^{-n}$}\ if we can, on input $n$ and $z$, compute a (complex) dyadic $y$ such that $|y-z|<2^{-n}$. 
If we can compute $z$ up to precision $2^{-n}$ for all $n$, then we say that we can compute $z$ up to any precision. 

Let furthermore $\intervall$ denote the class of intervals $[c,c']\times [d,d']$ where $c,c',d,d'$ are dyadics. We consider these intervals as sets of complex numbers.  
Let $\Pi$ be a finite alphabet. Then $\Pi^\ast$ denotes the set of finite sequences (words) over $\Pi$, $\varepsilon$ the empty sequence, and $\Pi^\infty$ the set of infinite sequences over $\Pi$. As usually, we think of $\Pi^\infty$ as a topological space where $\{ w\Pi^\infty\mid w\in\Pi^\ast\}$
is a basis of the topology. It is well known that $\Pi^\infty$ with the above topology is a compact space.

In our algorithm we need to approximate holomorphic functions in a certain function space. 
We will do this by representing this function class by $\Pi^\infty$ for $\Pi = \{ 1,2,3,4\}$.
To 
simplify things we will first define, for given interval $I\in\intervall$, the representation $\Psi_I^\infty$ of the complex numbers in
$I$. To this end, let for a given interval $I=[c,c']\times [d,d']$, the intervals
\[[c,c'']\times [d,d''],\quad [c'',c']\times [d,d''],\quad [c'',c']\times [d'',d'],\quad [c,c'']\times [d'',d'],\] where $c''=(c+c')/2$ and $d''=(d+d')/2$,
be denoted by $I_1$, $I_2$, $I_3$ and $I_4$, respectively.  
%\begin{figure}[h]
%\unitlength1cm
%\includegraphics[scale=0.2]{I.pdf}
%\caption{The definition of $I_1$,...,$I_4$}
%\end{figure}
With these preliminaries, we use sequences $\alpha_0$, $\alpha_1$, $\ldots$ of numbers in $\Pi:=\{1,2,3,4\}$ to represent complex numbers where we
define $\Psi_I:\Pi^\ast\rightarrow \intervall$ by
\[ \begin{array}{lll}\Psi_I(\varepsilon) = I,\\
    \Psi_I(w\alpha) = (\Psi_I(w))_\alpha  
\end{array}\]
for all $I\in\intervall$, $w\in\Pi^\ast$ and $\alpha\in\Pi$, and finally \[\Psi_I^\infty(\alpha_0\alpha_1\ldots)=\bigcap_i \Psi_I(\alpha_0\alpha_1\ldots\alpha_i),\] i.e.
we define complex numbers by suitable nested intervals.

Let now $m_1,m_2,\ldots$ denote a sequence of positive dyadics such that $\sum_{i\geq 1} m_i\cdot\varepsilon^i$ converges for all $\varepsilon\in [0,1)$.
Then the set of sequences $a_1,a_2,\ldots$ of complex numbers where the absolute values of the real and imaginary parts of $a_i$  are bounded by $m_i$ for all $i\geq 1$ defines a class $F_{m_1,m_2,\ldots}$ of holomorphic functions
on $\ID$ by identifying a sequence $a_1$, $a_2$, $\ldots$ with the power series $1+\sum_{i\geq 1} a_i\cdot z^i$. 
We are interested only in power series where the first coefficient is 1 because this class represents exactly the derivatives of normalized holomorphic functions. We use $\Psi$ to define a representation of
these holomorphic functions: Let $t_1$, $t_2$, $\ldots$ be a sequence of natural numbers such that any natural number is encountered infinitely often,
e.g. $0$, $0$, $1$, $0$, $1$, $2$, $0$, $1$, $2$, $3$, $0$, $\ldots$. Then for a given sequence $\alpha_1$, $\alpha_2$, $\ldots$ with $\alpha_i\in\Pi$ and
any $n\in\IN$ the sequence  $t_1$, $t_2$, $\ldots$ defines the subsequence $\alpha_{n_1}$,  $\alpha_{n_2}$, $\ldots$ of all those elements $\alpha_{n_j}$
such that $t_{n_j}=n$. In this way we can define a mapping \[\Psi_{m_1,m_2,\ldots}^\infty:\Pi^\infty\rightarrow F_{m_1,m_2,\ldots}\] by 
\[ \Psi_{m_1,m_2,\ldots}^\infty(\alpha_1\alpha_2\ldots)=1+\sum_{n\geq 1} \Psi_{[-m_n,m_n]\times [-m_n,m_n]}^\infty(\alpha_{n_1}\alpha_{n_2}\ldots)\cdot z^n.\]

To simplify notations we denote, for given holomorphic function $f:\ID\rightarrow \IC$, the anti-derivative of $f$, which maps $0$ to $0$, by $\int f$, 
i.e. $\int f:\ID\rightarrow\IC$, $(\int f)(0)=0$ and $(\int f)' = f$. 
Notice that $\int \Psi_{m_1,m_2,\ldots}^\infty(\alpha_1\alpha_2\ldots)\in\normed(\ID)$ with the above settings.
A proof of the following theorem can for example be found in \cite{muel87}\ or \cite{rett08b}.

\begin{theorem}\label{theo_L1}
Let $\alpha_1$, $\alpha_2$, $\ldots$, $m_1$, $m_2$, $\ldots$ and $t_1$, $t_2$, $\ldots$ be as above. Furthermore let $b_1$, $b_2$, $\ldots$ be
a sequence of positive dyadic numbers such that with \[f=\Psi_{m_1,m_2,\ldots}^\infty(\alpha_1\alpha_2\ldots),\] for all $n\geq 1$ we have
\[ \forall z\in\ID_{1-2^{-n}}.|f(z)|<b_n.\]
Then, on input $r\in\IY\cap(0,1)$, $z\in\IY[\iota]$ and \[\alpha_1, \alpha_2, \ldots,\quad m_1, m_2, \ldots,\quad t_1, t_2, \ldots,\quad b_1, b_2, \ldots\mbox{ and }n\in\IN,\] we can compute $\int f(z)$, $f(z)$  and $f^{(n)}(z)$
on $\ID$ up to any precision.
%(see e.g. [Rettinger continuation???][Mueller???]).
\end{theorem}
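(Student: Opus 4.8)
The plan is to reduce the computation to that of finitely many Taylor coefficients of $f$ together with an \emph{effective} bound on the tail of its power series. Write $f = 1 + \sum_{n\ge 1} a_n z^n$, so that $|\mathrm{Re}\,a_n|,|\mathrm{Im}\,a_n| \le m_n$ (hence $|a_n| \le 2m_n$) by the definition of $\Psi_{m_1,m_2,\ldots}^\infty$. The first observation is that each single coefficient $a_n$ is computable up to any precision from the given data: for fixed $n$, the sequence $t_1,t_2,\ldots$ determines which symbols of $\alpha_1\alpha_2\ldots$ describe $a_n$ via the interval map $\Psi_{[-m_n,m_n]\times[-m_n,m_n]}$, and reading the first $j$ of these symbols --- which exist, since $n$ occurs infinitely often among the $t_i$ --- confines $a_n$ to a dyadic square of side $2m_n2^{-j}$; as $m_n$ is part of the input, the $j$ needed for a prescribed precision is computable, and the relevant positions in $\alpha_1\alpha_2\ldots$ are found by scanning $t_1,t_2,\ldots$. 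The genuinely analytic ingredient --- and the step I expect to be the main obstacle --- is obtaining a \emph{computable} bound on $\sum_{n>N}|a_n|\,|z|^n$: knowing only the values $m_1,m_2,\ldots$ (even with the promise that $\sum_n m_n\varepsilon^n$ converges for all $\varepsilon\in[0,1)$) gives no effective control here, since a single large later value $m_n$ --- which does not affect convergence --- could spoil any premature estimate. This is exactly where the hypothesis on $b_1,b_2,\ldots$ is used: since $|f|\le b_k$ on the closed disc $\overline{\ID}_{1-2^{-k}}$ (by continuity from the strict bound on the open disc), Cauchy's coefficient estimates give $|a_n|\le b_k(1-2^{-k})^{-n}$ for all $k,n\ge 1$.

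Fix now a target precision $2^{-p}$; by assumption $|z|\le r<1$. First I would choose $k$ with $2^{-k}<1-r$, so that $q:=r/(1-2^{-k})$ is a rational in $(0,1)$ computable from $r$; then $|a_n|\,|z|^n\le b_kq^n$, hence $\sum_{n>N}|a_n|\,|z|^n\le b_kq^{N+1}/(1-q)$, and an $N$ making this right--hand side less than $2^{-p-2}$ is computable from $b_k$, $q$ and $p$. Next I would compute $\tilde a_1,\ldots,\tilde a_N\in\IY[\iota]$ so accurately that $\sum_{n=1}^{N}|\tilde a_n-a_n|\,|z|^n<2^{-p-2}$ (possible by the previous paragraph, using $|z|^n\le 1$), and output a dyadic approximation --- to within $2^{-p-2}$ --- of the explicit finite sum $1+\sum_{n=1}^{N}\tilde a_n z^n$. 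The three error sources (series truncation, inexact coefficients, inexact final evaluation) are each below $2^{-p-2}$, so the output is within $2^{-p}$ of $f(z)$. The very same truncation handles $\int f(z)=z+\sum_{n\ge 1}\frac{a_n}{n+1}z^{n+1}$, whose coefficients have absolute value at most $|a_n|$, so the same $N$ and the same coefficient approximations suffice (after dividing by $n+1$ and shifting the power of $z$).

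For the derivatives, write $f^{(\ell)}(z)=\sum_{j\ge \ell}j(j-1)\cdots(j-\ell+1)\,a_j\,z^{j-\ell}$, where $\ell$ denotes the order of differentiation (the input ``$n$'' of the theorem). The only new feature is the polynomial factor $j(j-1)\cdots(j-\ell+1)\le j^{\ell}$ in the tail. Picking a rational $q_1$ with $q<q_1<1$ and writing $j^{\ell}q^{j}=\bigl(j^{\ell}(q/q_1)^{j}\bigr)q_1^{j}$, one bounds $j^{\ell}(q/q_1)^{j}$ by the maximum $C_\ell$ of $x\mapsto x^{\ell}(q/q_1)^{x}$ on $[0,\infty)$, a computable constant; this yields the effective tail bound $\sum_{j>N}j(j-1)\cdots(j-\ell+1)\,|a_j|\,|z|^{j-\ell}\le C_\ell\,b_k\,r^{-\ell}\,q_1^{N+1}/(1-q_1)$, and the finite head $\sum_{j=\ell}^{N}j(j-1)\cdots(j-\ell+1)\,\tilde a_j\,z^{j-\ell}$ is then evaluated to the required precision exactly as above. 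Altogether this shows that $\int f(z)$, $f(z)$ and $f^{(n)}(z)$ can all be computed up to any precision from the listed inputs, as claimed.
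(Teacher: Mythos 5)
Your proof is correct, and it is essentially the standard argument: the paper does not prove Theorem~\ref{theo_L1} itself but refers to \cite{muel87} and \cite{rett08b}, where the computation is carried out in just this way --- recover each coefficient to any precision from the nested-interval encoding, use the bounds $b_k$ together with the Cauchy estimates to get an \emph{effective} tail bound on $|z|\le r<1$, and truncate. Your observation that the $m_n$ alone (despite the convergence promise) cannot yield an effective truncation point, so that the hypothesis on the $b_n$ is genuinely needed, is exactly the point of the theorem's somewhat unusual formulation.
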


\section{$\lambda$-bounding Functions}\label{sec_3_2}

Similar to the $\beta$-bounding functions in \cite{rett08}\ we define here a class $F_\lambda$ of holomorphic functions which
determines $\lambda$ in the sense that $\lambda=\inf_{f\in F_\lambda}\lambda_f$. Unlike \cite{rett08}\ we use different bounds on
the coefficients of the corresponding power series and, furthermore, use the compactness of $\Pi^\infty$ together with the
mapping $\Psi^\infty$ rather than compactness of the class $F_\lambda$ itself.

To start with we will prove suitable upper bounds on (the derivative of) functions $f$ whose $\lambda$-values approximate $\lambda$: 

\begin{lemma}\label{lemma_3_2}
Let $c>1$ be given. Then for any $f\in\normed(\ID)$ and $w\in\ID$ with
$|f'(w)|\geq c /(1-|w|^2)$ we have
\begin{equation*} \lambda_f\geq c\cdot \lambda.\end{equation*}
\end{lemma}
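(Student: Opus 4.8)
The idea is to exploit the conformal automorphisms of the disc to "re-center" the function $f$ at the point $w$ where its derivative is large, producing a new normalised function to which the definition of $\lambda$ applies directly. Concretely, let $\varphi_w(z) = (z+w)/(1+\overline{w}z)$ be the standard disc automorphism carrying $0$ to $w$, with $\varphi_w'(0) = 1-|w|^2$. The plan is to set $g(z) = \bigl(f(\varphi_w(z)) - f(w)\bigr)/f'(w)$; this is holomorphic on $\ID$, satisfies $g(0)=0$, and by the chain rule $g'(0) = f'(w)\cdot\varphi_w'(0)/f'(w) = 1-|w|^2$. So $g$ is not quite normalised, but $\tilde g := g/(1-|w|^2) \in \normed(\ID)$ with $\tilde g(0)=0$, and hence $\lambda_{\tilde g}\geq\lambda$ by the definition of the Landau constant.

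Next I would track how discs in the image transform. Since $\varphi_w$ maps $\ID$ onto $\ID$, we have $g(\ID) = \bigl(f(\ID)-f(w)\bigr)/f'(w)$, which is just a translate and rescaling of $f(\ID)$; therefore $l(g(\ID)) = l(f(\ID))/|f'(w)| = \lambda_f/|f'(w)|$. Dividing further by $1-|w|^2$ gives
\begin{equation*}
\lambda_{\tilde g} = \frac{\lambda_f}{|f'(w)|\cdot(1-|w|^2)}.
\end{equation*}
Combining with $\lambda_{\tilde g}\geq\lambda$ yields $\lambda_f \geq \lambda\cdot|f'(w)|\cdot(1-|w|^2)$, and the hypothesis $|f'(w)|\geq c/(1-|w|^2)$ immediately gives $\lambda_f \geq c\cdot\lambda$, as required.

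The only genuinely substantive point — and the step I would be most careful about — is the claim that precomposition with the automorphism $\varphi_w$ does not shrink the largest inscribed disc, i.e. that $l(g(\ID)) = l(f(\ID))/|f'(w)|$ exactly. This needs $\varphi_w$ to be a bijection of $\ID$ onto itself so that $g(\ID)$ and $f(\ID)$ differ only by the affine map $\zeta\mapsto(\zeta-f(w))/f'(w)$, under which radii of inscribed discs scale by exactly $1/|f'(w)|$; here one uses that $f'(w)\neq 0$, which is guaranteed since $|f'(w)|\geq c/(1-|w|^2)>0$. Everything else (holomorphy of $g$, the chain-rule computation of $g'(0)$, the value $\varphi_w'(0)=1-|w|^2$) is routine, and no appeal to Koebe's Lemma or any effectivity machinery is needed for this particular lemma.
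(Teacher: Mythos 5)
Your proof is correct and follows essentially the same route as the paper: precompose with the disc automorphism $z\mapsto(z+w)/(1+\overline w z)$, rescale by $f'(w)\cdot(1-|w|^2)$ to obtain a normalised function, and use that the automorphism preserves the image $f(\ID)$ while the affine rescaling scales inscribed-disc radii by exactly the modulus of the factor. The extra translation by $f(w)$ in your construction is harmless and the rest matches the paper's argument step for step.
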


\begin{proof} 
Let $w\in\ID$ and $f\in\normed(\ID)$ with $|f'(w)|\geq c /(1-|w|^2)$ be given. 
Let $g:\ID\rightarrow\ID$ be the automorphism \[z\mapsto (z+w)/(1+\overline w z).\]
Then $f\circ g(0) = f(w)$, $(f\circ g)'(0) = f'(w)\cdot (1-|w|^2)$, and 
\[h:=\frac{1}{f'(w)\cdot (1-|w|^2)}  f\circ g\] is a normalised
function. Therefore 
\[\frac{1}{c}\cdot \lambda_{f\circ g}\geq \frac{1}{|f'(w)|\cdot (1-|w|^2)}\cdot \lambda_{f\circ g}\geq \lambda_h\geq \lambda.\] 
As $f(\ID)=f\circ g(\ID)$ the statement of the lemma follows. 
\end{proof}

To simplify things we fix some value $c>1$, say $c=1+2^{-100}$ for the time being. To find $\lambda$ it suffices to consider all $\lambda_f$
of all normalised functions $f$ with $|f'(z)|\leq c/(1-|z|^2)$ for all $z\in\ID$. This immediately gives us a bound on the coefficients of
the corresponding power series:

\begin{lemma}\label{lemma_3_3}
Let $f$ be a normalised function such that 
$|f'(z)|\leq c/(1-|z|^2)$ for all $z\in\ID$. Then $f'(z)=\sum_n a_n\cdot z^n$ with
\begin{enumerate}[(a)]
\item $a_0=1$ and
\item $|a_n|\leq c\cdot e\cdot (n+2)/2$ for all $n\geq 1$.
\end{enumerate}
(Here $e$ denotes the Euler constant.)
\end{lemma}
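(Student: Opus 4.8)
The plan is the standard Cauchy-estimate argument, followed by an optimisation over the radius. Since $f\in\normed(\ID)$, the derivative $f'$ is holomorphic on $\ID$, so it has a Taylor expansion $f'(z)=\sum_{n\ge 0}a_n\cdot z^n$ there; part (a) is then immediate because $a_0=f'(0)=1$ by normalisation.

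For part (b), fix $n\ge 1$ and $r\in(0,1)$. By the Cauchy integral formula for the coefficients, $a_n=\frac{1}{2\pi\im}\int_{|z|=r}f'(z)\,z^{-n-1}\,dz$, hence
\[ |a_n|\;\le\; \frac{1}{r^{n}}\sup_{|z|=r}|f'(z)|\;\le\;\frac{c}{r^{n}(1-r^{2})}, \]
where the last inequality is exactly the hypothesis $|f'(z)|\le c/(1-|z|^2)$. Now I would choose $r$ to make the right-hand side as small as possible: maximising $r\mapsto r^{n}(1-r^{2})$ on $(0,1)$ gives $r^{2}=n/(n+2)$, so $1-r^{2}=2/(n+2)$ and $r^{n}=(n/(n+2))^{n/2}$, which yields
\[ |a_n|\;\le\;\frac{c\,(n+2)}{2}\cdot\Bigl(\frac{n+2}{n}\Bigr)^{n/2}. \]

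The only thing left is the elementary estimate $\bigl(\tfrac{n}{n+2}\bigr)^{n/2}>e^{-1}$ for all $n\ge 1$, i.e. $h(n)>-1$ where $h(x):=\tfrac{x}{2}\ln\tfrac{x}{x+2}$. This holds because $h$ is non-increasing on $[1,\infty)$ — differentiating gives $h'(x)=\tfrac12\ln\tfrac{x}{x+2}+\tfrac{1}{x+2}\le 0$, using $\ln(1-u)\le -u$ with $u=\tfrac{2}{x+2}$ — while $\lim_{x\to\infty}h(x)=-1$. Consequently $\bigl(\tfrac{n+2}{n}\bigr)^{n/2}<e$, and combining with the previous display gives $|a_n|<c\cdot e\cdot(n+2)/2$, as claimed. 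There is no real obstacle here: the argument is the routine Cauchy-estimate-plus-optimisation pattern, and the only mildly delicate point is the monotonicity-and-limit check that pins down the constant $e$.
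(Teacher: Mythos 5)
Your proof is correct and follows essentially the same route as the paper: the Cauchy coefficient estimate $|a_n|\le r^{-n}\sup_{|z|=r}|f'(z)|\le c/(r^n(1-r^2))$, the optimal choice $r=\sqrt{n/(n+2)}$, and the bound $\bigl(1+\tfrac{2}{n}\bigr)^{n/2}\le e$. The only difference is that you verify this last elementary inequality by a monotonicity-and-limit argument, whereas the paper simply invokes it as the standard fact $(1+1/m)^m\le e$; this is a cosmetic difference, not a different approach.
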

\proof
Item (a) is obvious because $f$ is assumed to be normalised.

By the Cauchy inequality we have for given $n$ and $r\in (0;1)$
\[ |a_n| \leq \frac{1}{r^n} \sup_{|z|=r} |f'(z)|\leq \frac{1}{r^n}\cdot \frac{c}{1-r^2}. \]
Choosing $r:=\sqrt{n/(n+2)}$ we get
\[ |a_n| \leq \frac{1}{r^n}\cdot \frac{c}{1-r^2}\leq c\cdot\left(1+\frac{2}{n}\right)^{\frac{n}{2}}\cdot \frac{n+2}{2}\leq c\cdot e\cdot \frac{n+2}{2}.\eqno{\qEd} \]

Let, for $n\geq 1$, $m_n$ be some dyadic approximation such that \[c\cdot e\cdot (n+2)/2\leq m_n \leq c\cdot e\cdot (n+2)/2 + 2^{-n}.\]
Then we have for given $\varepsilon\in (0;1)$
\[ \begin{array}{lll}1+\sum_{n\geq 1} m_n\cdot \varepsilon^n & \leq & \frac{c\cdot e}{2} \cdot (\sum_n \varepsilon^n + \sum_n (n+1)\cdot\varepsilon^n) + \sum_n 2^{-n} -c\cdot e\\[0.2cm]
& \leq & \frac{c\cdot e}{2} \cdot 
 \left(\frac{1}{1-\varepsilon}+\frac{1}{(1-\varepsilon)^2}\right) + (2-c\cdot e).\end{array}\]
A similar bound does also hold for the corresponding derivatives:
\[ \sum_{n\geq 1} n\cdot m_n\cdot \varepsilon^{n-1}\leq \frac{c\cdot e}{(1-\varepsilon)^3} + \frac{c\cdot e}{2\cdot (1-\varepsilon)^2} + 2. 
\]
Thus we can easily compute a sequence $b_1$, $b_2$, $\ldots$
of bounds such that Theorem \ref{theo_L1}\ can be applied. To simplify notations let \[\Psi^\infty:=\Psi_{m_1,m_2,\ldots}^\infty.\] 
In the definition of
$\Psi^\infty$ above the coefficients of the corresponding power series in $\Psi^\infty(\Pi^\infty)$ can get larger than we considered so far
because, to simplify things, we bound the real and imaginary parts of the $n$-th coefficient by $m_n$, not the absolute value of the coefficient itself. Thus an additional factor $\sqrt{2}$ is sufficient in the following corollary. 
Due to the structure of our algorithm, however, we can get rid of this additional factor in implementations by 
restricting $\Pi^\infty$ to a proper subclass by a simple test for the absolute value of the coefficients. 

We can summarize the essence of this
section by the following corollary:

\begin{corollary}\label{cor_3_2}
Let $c>1$ and let, for $n\geq 1$, $m_n$ be some dyadic number such that
\[c\cdot e\cdot \frac{n+2}{2}\leq m_n \leq c\cdot e\cdot \frac{n+2}{2} + 2^{-n}.\] 
 Then 
\begin{enumerate}[(a)]
\item $\lambda = \inf \{ \lambda_{\int f}\mid f\in \Psi^\infty(\Pi^\infty)\}$,
\item we can compute on given dyadic complex number $z\in\ID$, $\alpha\in\Pi^\infty$, $n\in\IN$ and $r\in\IY\cap(0;1)$ 
\begin{enumerate}[(i)]
\item $f^{(n)}(z)$ and 
\item upper bounds \[\mu_r' := \sqrt{2}\cdot\left(c\cdot \frac{e}{2} \cdot \left(\frac{1}{1-r}+\frac{1}{(1-r)^2}\right) +  (2-c\cdot e)\right)\] 
            and
                         \[\mu_r'' := \sqrt{2}\cdot\left(\frac{c\cdot e}{(1-r)^3} + \frac{c\cdot e}{2\cdot (1-r)^2} + 2\right)\]
            of $\sup \{ |f'(z)|\mid z\in \ID_r\}$ and $\sup \{ |f''(z)|\mid z\in \ID_r\}$, respectively,\\ 
\end{enumerate}
up to any precision, where $f:=\int \Psi^{\infty}(\alpha)$.
\end{enumerate}  
\end{corollary}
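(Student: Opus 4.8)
The plan is to assemble the corollary from the three pieces already in place: Lemma \ref{lemma_3_2} (functions whose derivative is large somewhere have large $\lambda_f$), Lemma \ref{lemma_3_3} together with the displayed estimates on $\sum m_n\varepsilon^n$ and $\sum n\,m_n\varepsilon^{n-1}$, and Theorem \ref{theo_L1} (effective evaluation of a power series and its derivatives). The only genuine work is part (a); parts (b)(i) and (b)(ii) are essentially bookkeeping on top of Theorem \ref{theo_L1}.

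For part (a), first note the easy inclusion: every $f\in\Psi^\infty(\Pi^\infty)$ has $f(0)=1$, so $\int f\in\normed(\ID)$ by the remark preceding Theorem \ref{theo_L1}, hence $\lambda_{\int f}\geq\lambda$ and thus $\inf\{\lambda_{\int f}\mid f\in\Psi^\infty(\Pi^\infty)\}\geq\lambda$. For the reverse inequality, fix $\epsilon>0$ and pick a normalised $g$ with $\lambda_g<\lambda+\epsilon$; I may assume $g(0)=0$ by the remark at the end of Section \ref{sec_prel}. If $g$ already satisfies $|g'(z)|\leq c/(1-|z|^2)$ for all $z\in\ID$, then by Lemma \ref{lemma_3_3} its derivative has coefficients bounded in absolute value by $c\cdot e\cdot(n+2)/2\leq m_n$, so in particular the real and imaginary parts of the $n$-th coefficient are bounded by $m_n$; hence $g'\in F_{m_1,m_2,\ldots}$, and since $\Psi^\infty:\Pi^\infty\to F_{m_1,m_2,\ldots}$ is onto there is $\alpha\in\Pi^\infty$ with $\Psi^\infty(\alpha)=g'$, giving $\int\Psi^\infty(\alpha)=g$ (both vanish at $0$ and have the same derivative) and so $\lambda_{\int\Psi^\infty(\alpha)}=\lambda_g<\lambda+\epsilon$. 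If instead $|g'(w)|>c/(1-|w|^2)$ for some $w\in\ID$, then Lemma \ref{lemma_3_2} (applied with $c$ as given, noting $|g'(w)|\geq c/(1-|w|^2)$) yields $\lambda_g\geq c\cdot\lambda>\lambda$; but $c$ was chosen arbitrarily close to $1$, so after fixing $\epsilon$ one chooses $c$ close enough to $1$ that $c\cdot\lambda\geq\lambda+\epsilon$, and this case simply cannot produce a value below $\lambda+\epsilon$. Hence only functions of the first type contribute near the infimum, and $\inf\{\lambda_{\int f}\mid f\in\Psi^\infty(\Pi^\infty)\}\leq\lambda+\epsilon$ for every $\epsilon>0$, which combined with the easy inequality gives (a). (The additional $\sqrt{2}$ in the statement of (b)(ii) and the discussion preceding the corollary account for the fact that $\Psi^\infty$ bounds real and imaginary parts separately; it plays no role in (a).)

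For part (b)(i): by the displayed estimate $\sum_{n\geq1} n\cdot m_n\cdot\varepsilon^{n-1}\leq \frac{c\cdot e}{(1-\varepsilon)^3}+\frac{c\cdot e}{2(1-\varepsilon)^2}+2$ for $\varepsilon\in(0;1)$, so one can compute rational upper bounds $b_n$ on $\sup\{|(\Psi^\infty(\alpha))(z)|\mid z\in\ID_{1-2^{-n}}\}$ uniformly — indeed this bound does not even depend on $\alpha$ — and Theorem \ref{theo_L1} then applies verbatim to $f=\int\Psi^\infty(\alpha)$ to compute $f^{(n)}(z)=(\Psi^\infty(\alpha))^{(n-1)}(z)$ for $n\geq1$ and $f(z)=(\int\Psi^\infty(\alpha))(z)$ up to any precision; the finitely many data $m_1,m_2,\ldots$, $t_1,t_2,\ldots$ and $b_1,b_2,\ldots$ required by Theorem \ref{theo_L1} are themselves computable from $c$ and the stated recipe for $m_n$. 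For (b)(ii): the claimed $\mu_r'$ and $\mu_r''$ are, up to the factor $\sqrt{2}$, exactly the right-hand sides of the two displayed inequalities evaluated at $\varepsilon=r$ (for $\mu_r''$ one differentiates the power series termwise once more, or equivalently reuses the derivative estimate shifted by one index), so they are dyadic-computable closed-form expressions in $c$ and $r$ and dominate $\sup\{|f'(z)|\mid z\in\ID_r\}=\sup\{|(\Psi^\infty(\alpha))(z)|\mid z\in\ID_r\}$ and $\sup\{|f''(z)|\mid z\in\ID_r\}$ respectively, uniformly in $\alpha$; computing them up to any precision is elementary dyadic arithmetic.

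The main obstacle is the surjectivity-and-matching step inside part (a): one must check that whenever a normalised $g$ with $g(0)=0$ obeys the coefficient bound, its derivative genuinely lies in the image of $\Psi^\infty$ and that $\int\Psi^\infty(\alpha)$ recovers $g$ rather than some other antiderivative. This is where the normalisations $f'(0)=1$ (so $a_0=1$, matching the "$1+\sum$" shape of elements of $F_{m_1,m_2,\ldots}$) and $g(0)=0$ (so that $\int$ is the right inverse of differentiation on the nose) are used, together with the clause "any natural number is encountered infinitely often" in the sequence $t_1,t_2,\ldots$, which is precisely what makes $\Psi^\infty$ onto $F_{m_1,m_2,\ldots}$. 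Everything else is routine estimation already carried out in the text.
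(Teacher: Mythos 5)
Your overall architecture is exactly the paper's: the corollary is stated as a summary of Section 3, and its justification is precisely the combination of Lemma \ref{lemma_3_2} (to discard functions violating the derivative bound), Lemma \ref{lemma_3_3} plus surjectivity of $\Psi^\infty$ onto $F_{m_1,m_2,\ldots}$ (to show the remaining functions are represented), and Theorem \ref{theo_L1} together with the displayed estimates on $\sum m_n\varepsilon^n$ and $\sum n\,m_n\varepsilon^{n-1}$ (for part (b), with the extra $\sqrt{2}$ because $|a_n|\leq\sqrt{2}\,m_n$ when only the real and imaginary parts are bounded by $m_n$). Parts (b)(i) and (b)(ii) are handled correctly.

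There is, however, one quantifier slip in your part (a) that you should repair. In the case $|g'(w)|>c/(1-|w|^2)$ you conclude $\lambda_g\geq c\lambda$ and then write that ``after fixing $\epsilon$ one chooses $c$ close enough to $1$ that $c\cdot\lambda\geq\lambda+\epsilon$.'' This is wrong on two counts: $c\lambda\geq\lambda+\epsilon$ is equivalent to $c\geq 1+\epsilon/\lambda$, so making $c$ \emph{closer} to $1$ works against you; and, more importantly, $c$ (and hence the sequence $m_n$ and the class $\Psi^\infty(\Pi^\infty)$) is fixed by the hypothesis of the corollary, so you are not free to re-choose it after picking $\epsilon$. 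The correct order is the reverse: with $c$ fixed, note that $(c-1)\lambda>0$ (indeed $\lambda>1/2$), and restrict attention to $\epsilon\in\bigl(0,(c-1)\lambda\bigr)$. For such $\epsilon$, any normalised $g$ with $\lambda_g<\lambda+\epsilon\leq c\lambda$ must, by the contrapositive of Lemma \ref{lemma_3_2}, satisfy $|g'(z)|< c/(1-|z|^2)$ everywhere, hence lies in $\int\Psi^\infty(\Pi^\infty)$ by your surjectivity argument, and letting $\epsilon\downarrow 0$ over this restricted range still gives $\inf\{\lambda_{\int f}\}\leq\lambda$. With that one-line fix your proof of (a) is complete and coincides with the paper's intent.
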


\section{The $\lambda$-value of a single Function}\label{sec_3_3}

In this section we will show how $\lambda_f$ can be approximated. More precisely, we will give lower bounds on $\lambda_f$ 
which are at the same time approximately upper bounds for $\lambda$. This will be enough to compute $\lambda$ up to any precision in the end. 
To this end, let $\alpha_1\alpha_2\ldots$ be a fixed sequence in $\Pi^\infty$ which will be fed to all algorithms considered in this section as input. Furthermore, we 
will denote the corresponding functions $\Psi^\infty(\alpha_1\ldots)$ and $\int\Psi^\infty(\alpha_1\ldots)$ by $f'$ and $f$, respectively.

As in the case of Bloch's constant, it suffices to search large discs in the image of a proper sub-domain of the unit disc to approximate $\lambda_f$.
One advantage of this is that we have to search only on a bounded image for such discs. The following lemma can be proven by a simple transformation
of holomorphic functions $f(z)\mapsto \frac{1}{r} f(r\cdot z)$ (see e.g. \cite{con78}).

\begin{lemma}\label{lemma_3_1}
Let $r$ with $0<r<1$ and a domain $D$ with $\ID_r\subseteq
D\subseteq\ID$ be given. Then
\[ r \lambda\leq \lambda_f(D)\leq\lambda_f.
\]
\end{lemma}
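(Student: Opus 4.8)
The plan is to treat the two inequalities separately. The right-hand inequality $\lambda_f(D)\le\lambda_f$ is pure monotonicity of images under $f$, and the left-hand inequality $r\lambda\le\lambda_f(D)$ follows from the scale-invariance built into the definition of $\lambda$, applied to the rescaled function $z\mapsto\frac1r f(rz)$.

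For the upper bound I would argue as follows: from $D\subseteq\ID$ we get $f(D)\subseteq f(\ID)$, hence every open disc contained in $f(D)$ is also contained in $f(\ID)$. Taking the supremum over the radii of such discs yields $l(f(D))\le l(f(\ID))$, which by definition is exactly $\lambda_f(D)\le\lambda_f$.

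For the lower bound: using $\ID_r\subseteq D$ and the same monotonicity, $\lambda_f(\ID_r)=l(f(\ID_r))\le l(f(D))=\lambda_f(D)$, so it suffices to show $l(f(\ID_r))\ge r\lambda$. Define $g(z):=\frac1r f(rz)$; since $f$ is holomorphic on $\ID$, $g$ is holomorphic on $\ID_{1/r}$ and in particular on $\ID$, and $g'(0)=f'(0)=1$, so $g\in\normed(\ID)$ and therefore $\lambda_g\ge\lambda$ by the definition of $\lambda$. Now $g(\ID)=\frac1r f(\ID_r)$, and multiplication by $1/r$ is a bijection of $\IC$ carrying a disc $\ID_{r\rho}(rw)$ onto $\ID_\rho(w)$; hence $\ID_{r\rho}(rw)\subseteq f(\ID_r)$ if and only if $\ID_\rho(w)\subseteq g(\ID)$, which gives $l(f(\ID_r))=r\cdot l(g(\ID))=r\lambda_g\ge r\lambda$. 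Chaining the inequalities gives $r\lambda\le\lambda_f(\ID_r)\le\lambda_f(D)$, as required.

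There is no substantial obstacle here: the only point to check carefully is the elementary fact that scaling by $1/r$ transports discs to discs with radii scaled by $1/r$, so that $l$ of the scaled image equals $1/r$ times $l$ of the original image. The content of the lemma is simply that $\lambda_f(\cdot)$ is monotone in its domain argument and that the class $\normed(\ID)$ is closed under the normalised rescaling $f(z)\mapsto\frac1r f(rz)$.
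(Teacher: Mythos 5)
Your proof is correct and follows exactly the route the paper indicates: the paper gives no detailed argument but points to the rescaling $f(z)\mapsto\frac{1}{r}f(rz)$, which is precisely your construction of $g$, combined with the obvious monotonicity of $l(f(\cdot))$. Nothing further is needed.
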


Now, approximating the image $f(\ID_r)$ can be done straightforwardly. 
Furthermore, with quite basic methods, the largest disc inside such an approximation could be found easily.
We are doing exactly this by the $\varepsilon$-covering grids which we introduce next.
However, an approximation will not necessarily mean that we have an approximation of $\lambda_f$:
there indeed exist examples, where small changes of $f$ can lead to a large change in $\lambda_f$.
% (see Figure ???).
That means that the homotopic methods used in \cite{rett08}\ cannot be used here. Instead, we will show that the approximations 
we get by $\varepsilon$-covering grids for small $\varepsilon$ are still suitable to bound $\lambda$ from above.

\begin{definition}\label{def_3_1}
Let $\varepsilon>0$ and a bounded subset $A\subseteq\IC$ be given. Then an
{\it $\varepsilon$-covering grid}\ of $A$ is a tuple $(\varepsilon,\delta, G)$ where $0<\delta\leq\varepsilon/4$ is some dyadic number and
$G$ is a non-empty, finite subset of $\delta\IZ\times\delta\IZ$ such that
\begin{enumerate}[(a)]
\item $A\cap \left(\delta\IZ\times\delta\IZ\right) \subseteq G$ and
\item $\forall z\in G.\exists {a\in A}.|z-a|\leq\varepsilon/4$.\\
\end{enumerate}
%
%The set of $\varepsilon$-covering grids of $A$ is denoted by $\Cov_\varepsilon(A)$.
\end{definition}

Notice, that for any $\varepsilon > \varepsilon' >0$ and any $\varepsilon'$-covering grid $(\varepsilon',\delta, G)$ of a set $A$, the
tuple $(\varepsilon, \delta, G)$ is an $\varepsilon$-covering grid of $A$. In this sense any $\varepsilon'$-covering grid of a set $A$ is
also an $\varepsilon$-covering grid.

Following the notation of Section \ref{sec_prel}, $l(A)$ denotes the radius of the largest disc inside a domain $A\subseteq\IC$. Furthermore let \[l(\varepsilon,\delta, G):=\delta+\max_{z\in G}\min_{y\in (\delta\IZ\times\delta\IZ)\setminus G} |z-y|\] for
an $\varepsilon$-covering grid $(\varepsilon,\delta, G)$. Then the easy to compute value $l(\varepsilon,\delta, G)$ gives us an approximation
of the largest disc inside the "covered set" \[D(\varepsilon,\delta, G) := \bigcup_{z\in G} \ID_{\frac{3}{4}\varepsilon}(z).\] More precisely we get the following result:

\begin{lemma}\label{lemma_3_2}
Let $(\varepsilon,\delta, G)$ be an $\varepsilon$-covering grid of $A\subseteq \IC$. Then
\[ l(A)\leq l(\varepsilon,\delta, G)\leq l\left(D(\varepsilon,\delta,G)\right).\]
\end{lemma}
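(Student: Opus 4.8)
The plan is to prove the two inequalities separately, both by essentially elementary covering arguments combined with the definitions of $l(A)$, $l(\varepsilon,\delta,G)$ and $D(\varepsilon,\delta,G)$.

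For the left inequality $l(A)\leq l(\varepsilon,\delta,G)$, I would argue as follows. Suppose $\ID_\rho(z_0)\subseteq A$ for some $z_0$ and $\rho>0$; it suffices to show $\rho\leq l(\varepsilon,\delta,G)$. First I would locate a grid point $z\in\delta\IZ\times\delta\IZ$ close to the centre $z_0$: by the grid spacing there is such a $z$ with $|z-z_0|\leq\delta/\sqrt2$ (or just $\leq\delta$, which is all we need). If $\rho$ is not too small, this $z$ lies in $\ID_\rho(z_0)\subseteq A$, hence $z\in A\cap(\delta\IZ\times\delta\IZ)\subseteq G$ by Definition \ref{def_3_1}(a). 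Now I would estimate $\min_{y\in(\delta\IZ\times\delta\IZ)\setminus G}|z-y|$ from below using that a whole disc around $z_0$ is contained in $A$, so every grid point within distance roughly $\rho-\delta$ of $z$ also lies in $A$, hence in $G$; therefore any grid point \emph{not} in $G$ is at distance at least about $\rho-\delta$ from $z$. Adding the leading $\delta$ in the definition of $l(\varepsilon,\delta,G)$ recovers $\rho$. The one delicate point is handling the case where $\rho$ is comparable to $\delta$ (so that no grid point need lie inside $\ID_\rho(z_0)$); there one checks the inequality holds trivially because $l(\varepsilon,\delta,G)\geq\delta$ by definition.

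For the right inequality $l(\varepsilon,\delta,G)\leq l(D(\varepsilon,\delta,G))$, the idea is to exhibit an explicit disc of radius $l(\varepsilon,\delta,G)$ inside $D(\varepsilon,\delta,G)=\bigcup_{z\in G}\ID_{\frac34\varepsilon}(z)$. Pick $z^\ast\in G$ achieving the maximum in the definition of $l(\varepsilon,\delta,G)$, so that every $y\in\delta\IZ\times\delta\IZ$ with $|z^\ast-y|< l(\varepsilon,\delta,G)-\delta$ lies in $G$. I claim $\ID_{l(\varepsilon,\delta,G)}(z^\ast)\subseteq D(\varepsilon,\delta,G)$: given any $p$ with $|p-z^\ast|<l(\varepsilon,\delta,G)$, choose the nearest grid point $y$ to $p$, so $|p-y|\leq\delta/\sqrt2\leq\delta$; then $|z^\ast-y|\leq|z^\ast-p|+|p-y|<l(\varepsilon,\delta,G)$, which is not quite the bound $l(\varepsilon,\delta,G)-\delta$ we want, so instead I would shrink slightly — pick $y$ nearest to $p$, note $|z^\ast-y|<l(\varepsilon,\delta,G)$ and that the strict inequality together with the grid structure forces $|z^\ast-y|\leq l(\varepsilon,\delta,G)-\delta$ when $|z^\ast-y|$ is itself a grid distance bounded away from the max; hence $y\in G$, and $|p-y|\leq\delta\leq\varepsilon/4<\frac34\varepsilon$, so $p\in\ID_{\frac34\varepsilon}(y)\subseteq D(\varepsilon,\delta,G)$. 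Thus $D(\varepsilon,\delta,G)$ contains a disc of radius $l(\varepsilon,\delta,G)$, giving $l(\varepsilon,\delta,G)\leq l(D(\varepsilon,\delta,G))$.

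The main obstacle I anticipate is the bookkeeping of the additive $\delta$-terms and the $\sqrt2$ factor from the grid geometry: one must be careful that the extra $\delta$ in the definition of $l(\varepsilon,\delta,G)$ exactly compensates for the loss incurred when snapping centres to grid points, and that the condition $\delta\leq\varepsilon/4$ is used precisely to guarantee $\delta<\frac34\varepsilon$ so that snapped points still land inside the enlarged discs of radius $\frac34\varepsilon$. Everything else is routine: the argument is a direct unwinding of Definition \ref{def_3_1} and the definitions of $l(\varepsilon,\delta,G)$ and $D(\varepsilon,\delta,G)$, with no input from complex analysis needed.
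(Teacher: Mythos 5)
Your argument for the left inequality is essentially the paper's: snap the centre of a disc $\ID_\rho(z_0)\subseteq A$ to a grid point $y\in G$ at distance at most $\delta$, observe that any grid point $x\notin G$ satisfies $x\notin A$ by condition (a), hence $|x-y|+\delta\geq\rho$, and dispose of the case $\rho\leq\delta$ separately. That half is fine.

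The right inequality, however, has a genuine gap. You correctly sense that taking the grid point $y$ nearest to $p$ does not work, but your proposed repair --- that ``the strict inequality together with the grid structure forces $|z^\ast-y|\leq l(\varepsilon,\delta,G)-\delta$'' --- is false. To conclude $y\in G$ you would need $|z^\ast-y|<l(\varepsilon,\delta,G)-\delta=\min_{w\notin G}|z^\ast-w|$, and the nearest grid point to $p$ can easily violate this: take $G=\{0\}$, so $l(\varepsilon,\delta,G)=2\delta$, and $p=(1.9\delta,0)$; the nearest grid point is $(2\delta,0)\notin G$. The missing idea (which is the actual content of the paper's Claim~\ref{claim_n_1}) is a two-step construction: first choose a grid point $z'$ with $|z'-p|<\sqrt2\,\delta$ \emph{and} $|z^\ast-z'|<l(\varepsilon,\delta,G)$; if $z'\notin G$, step one grid cell back towards $z^\ast$ to obtain $z$ with $|z-z'|\leq\sqrt2\,\delta$ and $|z-z^\ast|\leq|z'-z^\ast|-\delta<l(\varepsilon,\delta,G)-\delta$, which forces $z\in G$. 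The point of $G$ you end up with is then only guaranteed to lie within $2\sqrt2\,\delta$ of $p$, not within $\delta$; this is exactly why $D(\varepsilon,\delta,G)$ uses discs of radius $\tfrac34\varepsilon$ and why $\delta\leq\varepsilon/4$ is needed (to get $2\sqrt2\,\delta\leq\tfrac{\sqrt2}{2}\varepsilon<\tfrac34\varepsilon$), not merely to ensure $\delta<\tfrac34\varepsilon$ as you suggest. Without this step-back argument your proof of the second inequality does not go through.
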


\begin{proof}
We start by proving the left inequality: Let $\ID_r(z)$ be some disc with $\ID_r(z)\subseteq A$.
We can assume that $r>\delta$ because otherwise we have $r\leq\delta\leq l(\varepsilon,\delta, G)$ anyway. 
There exists some $y\in G$ such that $|y-z|\leq \delta$.
Let furthermore $x\in \delta\IZ\times\delta\IZ$ be some point with $x\not\in G$. Then we have $\overline{\ID}_{|x-y|}(y)\not\subseteq A$ and
thus $\overline{\ID}_{|x-y|+\delta}(z)\not\subseteq A$. Therefore  $|x-y|+\delta\geq r$ and the statement follows.

The right inequality follows immediately from the following claim:

\begin{claim} \label{claim_n_1}
Let $x\in G$ be given and $r:= \delta+\min_{z\in (\delta\IZ\times\delta\IZ)\setminus G} |x-z|$. Then
$ \ID_r(x)\subseteq D(\varepsilon,\delta,G)$. 
\end{claim}

Let us assume that there exist $y\not\in D(\varepsilon,\delta,G) = \bigcup_{z\in G} \ID_{\frac{3}{4}\varepsilon}(z)$
such that $|x-y|< r$.
Then there also exists some point $z'\in\delta\IZ\times\delta\IZ$ such that $|x-z'|<r$ and $|z'-y|< \sqrt{2} \delta$. As $\sqrt{2} \delta <3/4\cdot \varepsilon$, $z'$ cannot belong to $G$ because otherwise $y$ would belong to $\bigcup_{z\in G} \ID_{\frac{3}{4}\varepsilon}(z)$. 
\begin{figure}[h]
\unitlength1cm
\includegraphics[scale=0.3]{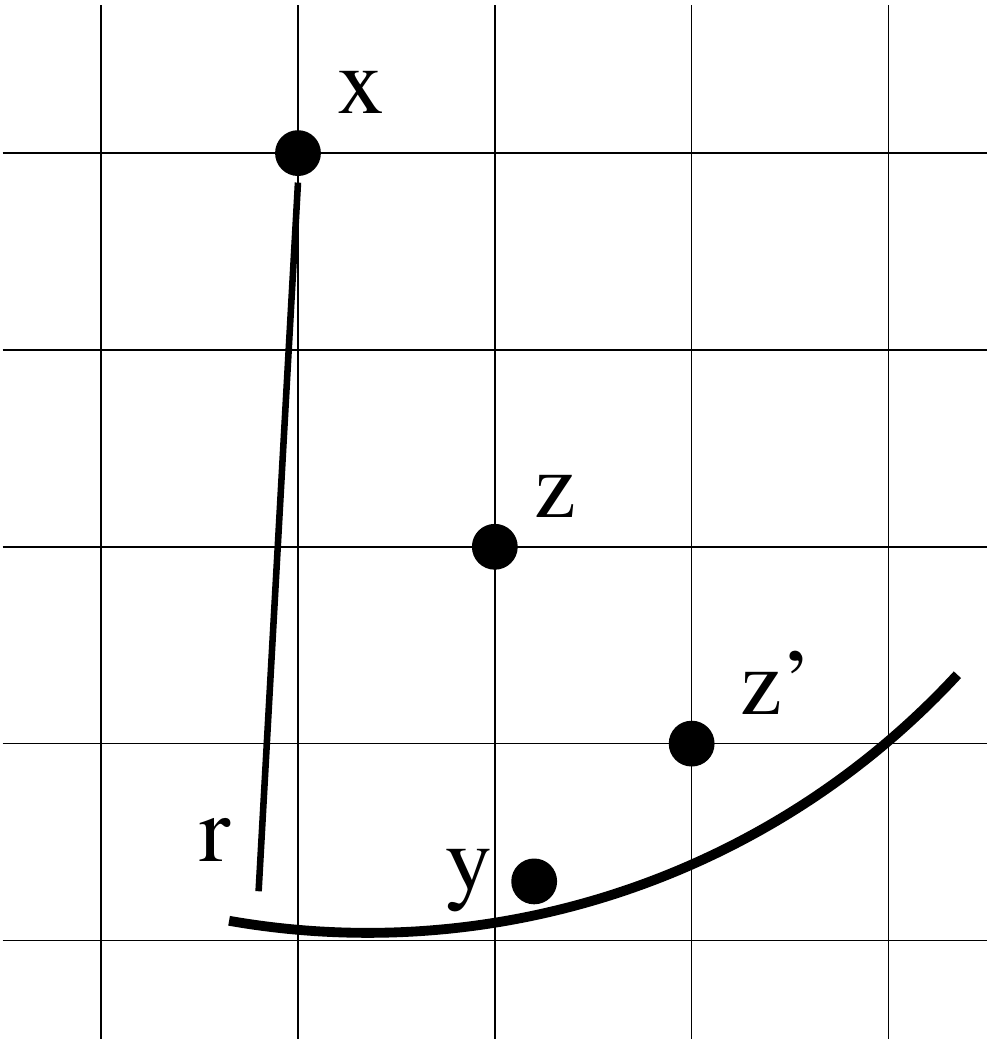}
\caption{Proof of Claim \ref{claim_n_1}}
\end{figure}
Furthermore, as $z'\not= x$ and $x,z'\in\delta\IZ\times\delta\IZ$ there 
exists some $z\in \delta\IZ\times\delta\IZ$ such that $|z-z'|\leq\sqrt{2}\cdot\delta$ and $|z-x|\leq |z'-x|-\delta < r-\delta$, which means
$z\in G$ by the definition of $r$ and furthermore \[|z-y|\leq 2\cdot\sqrt{2}\cdot\delta \leq 2\cdot\frac{\sqrt{2}}{4} \cdot \varepsilon < \frac{3}{4}\cdot\varepsilon\] in contradiction
to $y\not\in \bigcup_{z\in G} \ID_{\frac{3}{4}\varepsilon}(z)$.
\end{proof}

Besides, an $\varepsilon$-covering grid of $f(\ID_r)$ can be easily computed:

\begin{lemma}\label{lemma_3_3}
For given dyadic numbers $\varepsilon>0$ and $0<r<1$ as input, we can compute an $\varepsilon$-covering grid of $f(\ID_r)$. 
\end{lemma}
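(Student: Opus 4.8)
The plan is to build the covering grid from two effective ingredients supplied by the preceding results. The first is a \emph{computable Lipschitz constant for $f$ on $\overline{\ID}_r$}: by Corollary~\ref{cor_3_2} we can compute the upper bound $\mu_r'$ of $\sup_{z\in\ID_r}|f'(z)|$ up to any precision, so we fix a dyadic $M$ with $M>\mu_r'$; since $f'$ is continuous, $|f'|<M$ on all of $\overline{\ID}_r$, and as $\overline{\ID}_r$ is convex this makes $f$ an $M$-Lipschitz map there (note $M\ge|f'(0)|=1>0$). The second is \emph{evaluation of $f$ at complex dyadic points of $\ID$}: by Theorem~\ref{theo_L1}, applied with the bounds $b_1,b_2,\ldots$ constructed in Section~\ref{sec_3_2}, for any complex dyadic $w\in\ID$ and any prescribed dyadic error we can compute an approximation of $f(w)=(\int f')(w)$.

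Now fix dyadic parameters $\delta\le\varepsilon/4$, $\theta\le\varepsilon/16$ and $\eta\le\min\{\varepsilon/(48M),\,r/4\}$, each of the form $2^{-k}$ (these inequalities involve only dyadic arithmetic). Let $W:=\{\,w\in\eta\IZ\times\eta\IZ\mid |w|<r\,\}$, a finite, effectively listable subset of $\ID_r$ since the test $|w|^2<r^2$ is an exact comparison of dyadics, and for each $w\in W$ compute a complex dyadic $p_w$ with $|p_w-f(w)|\le\theta$. Finally put
\[ G \;:=\; \{\, v\in\delta\IZ\times\delta\IZ \;\mid\; \exists\,w\in W.\ |v-p_w|\le\varepsilon/8 \,\}. \]
Then $G$ is finite, since it lies within distance $\varepsilon/8$ of the finite set $\{p_w\mid w\in W\}$, and it is computable, as for each $w$ one simply lists the finitely many $\delta$-grid points in the closed $\varepsilon/8$-disc about $p_w$; moreover $f(0)=0$ forces $|p_0|\le\theta\le\varepsilon/8$, so $0\in G$ and $G\ne\emptyset$. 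Condition (b) of Definition~\ref{def_3_1} is immediate: for $v\in G$ pick $w\in W$ with $|v-p_w|\le\varepsilon/8$; then $a:=f(w)\in f(\ID_r)$ and $|v-a|\le|v-p_w|+|p_w-f(w)|\le\varepsilon/8+\theta\le\varepsilon/4$.

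The real content is condition (a), namely that \emph{every} $\delta$-grid point $v=f(u)$, $u\in\ID_r$, lies in $G$, and the one genuine obstacle is that $u$ may sit arbitrarily close to the circle $|z|=r$, so that no fixed domain mesh samples $u$ directly; this is precisely why the definition of a covering grid tolerates the slack $\varepsilon/4$ in (b), which lets us safely \emph{over}-include grid points into $G$. I would resolve it by radial projection: given $u\in\ID_r$, set $u':=u$ if $|u|\le r-2\eta$ and $u':=u\,(r-2\eta)/|u|$ otherwise, so that in both cases $|u-u'|\le 2\eta$ and $|u'|\le r-2\eta$. Let $w$ be a nearest point of $\eta\IZ\times\eta\IZ$ to $u'$; then $|u'-w|\le\eta$ and $|w|\le r-\eta<r$, hence $w\in W$. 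Both segments $[u,u']$ and $[u',w]$ stay inside $\overline{\ID}_r$, so the Lipschitz estimate gives $|f(u)-f(w)|\le M(|u-u'|+|u'-w|)\le 3M\eta\le\varepsilon/16$, whence $|v-p_w|\le 3M\eta+\theta\le\varepsilon/8$ and $v\in G$, as needed.

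What is left to fill in is entirely routine: the elementary modulus estimates showing that $[u,u']$ and $[u',w]$ lie in $\overline{\ID}_r$ and that a nearest $\eta$-grid point of $u'$ is within distance $\eta$; the remark that convexity of $\overline{\ID}_r$ upgrades the pointwise bound $|f'|<M$ to $M$-Lipschitzness of $f$; and the bookkeeping making the choices of $\delta,\eta,\theta$, the enumeration of $W$, the approximations $p_w$, and the membership test for $G$ effective over the dyadics.
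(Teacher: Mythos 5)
Your proof is correct and follows essentially the same strategy as the paper's: sample a sufficiently fine dyadic grid in the domain disc, evaluate $f$ approximately at those sample points using the bounds from Corollary~\ref{cor_3_2} and Theorem~\ref{theo_L1}, and declare a $\delta$-grid point to be in $G$ whenever it is close enough to one of the approximate values, with the Lipschitz bound $\mu_r'$ controlling condition (a) and the evaluation error controlling condition (b). The only real difference is that you treat the near-boundary case of condition (a) more carefully (via radial projection onto $|z|\le r-2\eta$), where the paper simply asserts the existence of a domain grid point of $G_D$ within $\sqrt{2}\,\delta_D$ of any $x\in\ID_r$; your constants differ slightly but both choices work.
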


\proof
Following the notation of Theorem 2 we can compute a dyadic upper bound $\mu_{r}'$ on the maximum 
of the values $|f'(z)|$ with $z\in\ID_{r}$. Furthermore let $\delta_D>0$ be a dyadic 
lower bound on $\varepsilon/(16\cdot\mu_{r}')$ and \[G_D:=\ID_r\cap (\delta_D\IZ\times\delta_D\IZ).\] Notice that $0\in G_D$ and thus $G_D\not=\emptyset$. Furthermore we can compute for every $z\in G_D$ some approximation $d_z$ with $|d_z-f(z)|\leq\varepsilon/16$. 

Then $(\varepsilon, \varepsilon/4, G)$ is an $\varepsilon$-covering grid of $f(\ID_r)$, where
\[ G = \bigcup_{z\in G_D} \{ y\in \frac{\varepsilon}{4}\IZ\times\frac{\varepsilon}{4}\IZ\mid |d_z-y|\leq \frac{3}{2} \cdot \frac{\varepsilon}{8}\}. \]

To see item (a) of Definition \ref{def_3_1}\ let $y\in f(\ID_r)\cap \left(\frac{\varepsilon}{4}\IZ\times\frac{\varepsilon}{4}\IZ\right)$ be given. Furthermore let $x\in\ID_r$ such that $f(x)=y$. Then there exists some $z\in G_D$ with $|z-x|<\sqrt{2}\cdot\delta_D$ and we get 
\[ \begin{array}{lll}|y-d_z|& = & |f(x)-d_z|\\
& \leq & |f(x)-f(z)|+|f(z)-d_z|\\
& \leq & \mu_{r}'\cdot |x-z| + \frac{\varepsilon}{16}\\ 
& \leq & \sqrt{2}\cdot\frac{\varepsilon}{16} + \frac{\varepsilon}{16}\\
& \leq & \frac{3}{2}\cdot\frac{\varepsilon}{8} \end{array}\]

To see item (b) of Definition \ref{def_3_1}\ let now $y\in G$ be given. Then there exist $z\in G_D$ such that $|d_z-y|\leq (3/2)\cdot (\varepsilon/8)$ and
 we have \[ \inf_{a\in f(\ID_r)} | y - a| \leq |y-f(z)|\leq |d_z-y| + | f(z)-d_z| \leq \frac{3}{2}\cdot\frac{\varepsilon}{8} + \frac{\varepsilon}{16} = \frac{\varepsilon}{4}.\eqno{\qEd}\]

Up to this point we have followed the naive way of computing large discs in approximations of $f(\ID_r)$. We have already argued that this
will not necessarily give us appropriate bounds on $\lambda_f$. The main step we will take next is to show that for suitable $\hat r > r$
and suitably small $\varepsilon$ we get that for any $\varepsilon$-covering grid $(\varepsilon,\delta,G)$ of $f(\ID_r)$ we have $D(\varepsilon,\delta,G)\subseteq f(\ID_{\hat r})$, which in the end allows us to show that the bound given by $l(\varepsilon,\delta,G)$ is actually not too bad.

\begin{lemma}\label{lemma_3_4}
On given dyadic number $0<r<1$ we can compute dyadic numbers $\varepsilon>0$ and $\hat r$ with $r<\hat r<1$ such that for any $\varepsilon$-covering grid $(\varepsilon,\delta,G)$ of $f(\ID_r)$ we have
\[ \bigcup_{z\in G} \ID_{\frac{3}{4}\varepsilon}(z) \subseteq f(\ID_{\hat r}).\]
\end{lemma}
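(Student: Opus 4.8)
The plan is to reduce the statement to a uniform local covering property of $f$, prove that property via Rouch\'e's theorem, and make the resulting $\varepsilon$ computable by a compactness-and-search argument. It suffices to compute dyadic $\varepsilon>0$ and $\hat r$ with $r<\hat r<1$ such that $\ID_\varepsilon(f(w))\subseteq f(\ID_{\hat r})$ for every $w\in\overline{\ID}_r$: indeed, if $(\varepsilon,\delta,G)$ is an $\varepsilon$-covering grid of $f(\ID_r)$ and $z\in G$, then item~(b) of Definition~\ref{def_3_1} provides $w\in\ID_r$ with $|z-f(w)|\le\varepsilon/4$, so $\ID_{\frac34\varepsilon}(z)\subseteq\ID_\varepsilon(f(w))\subseteq f(\ID_{\hat r})$, and taking the union over $z\in G$ gives the lemma. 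From now on fix $\hat r:=(1+r)/2$ and let $M'$ be a dyadic upper bound for $\sup_{z\in\ID_{\hat r}}|f'(z)|$, computable by Corollary~\ref{cor_3_2}; note that $f$ is $M'$-Lipschitz on the convex set $\ID_{\hat r}$.

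The analytic core is the following Rouch\'e estimate: if $q\in\ID$ and $\rho>0$ satisfy $\overline{\ID}_\rho(q)\subseteq\ID$ and $s:=\min_{|z-q|=\rho}|f(z)-f(q)|>0$, then $\ID_s(f(q))\subseteq f(\ID_\rho(q))$; for $|a-f(q)|<s$ the functions $f-f(q)$ and $f-a$ differ on $\{|z-q|=\rho\}$ by less than $|f-f(q)|$, hence have the same number of zeros in $\ID_\rho(q)$, which is at least one owing to the zero at $q$, so $a\in f(\ID_\rho(q))$. Call a pair of dyadics $(q,\rho)$ \emph{admissible} if $\overline{\ID}_\rho(q)\subseteq\ID_{\hat r}$ (equivalently $|q|+\rho<\hat r$), and \emph{good} if in addition $s(q,\rho):=\min_{|z-q|=\rho}|f(z)-f(q)|>0$. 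For an admissible pair $s(q,\rho)$ is computable up to any precision, since $z\mapsto|f(z)-f(q)|$ is computable on $\overline{\ID}_\rho(q)$ by Theorem~\ref{theo_L1} and $M'$-Lipschitz there; hence goodness is semidecidable, and once confirmed we also have a positive dyadic $\underline s\le s(q,\rho)$, which we keep refining towards $s(q,\rho)$. For a good pair put $V_{q,\rho}:=\ID_{\underline s/(2M')}(q)$. Every $w\in V_{q,\rho}$ satisfies $|f(w)-f(q)|\le M'|w-q|<\underline s/2$, so, using $\ID_\rho(q)\subseteq\ID_{\hat r}$ and the Rouch\'e estimate,
\[ \ID_{\underline s/2}(f(w))\subseteq\ID_{\underline s}(f(q))\subseteq\ID_{s(q,\rho)}(f(q))\subseteq f(\ID_\rho(q))\subseteq f(\ID_{\hat r}). \]

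The open sets $V_{q,\rho}$, over all good pairs, cover $\overline{\ID}_r$: given $w\in\overline{\ID}_r$, since $f\in\normed(\ID)$ is non-constant the zeros of $f-f(w)$ are isolated, so there is a dyadic $\rho$ with $\overline{\ID}_\rho(w)\subseteq\ID_{\hat r}$ whose only zero of $f-f(w)$ is $w$, whence $s(w,\rho)>0$; for any dyadic $q$ close enough to $w$ the pair $(q,\rho)$ is admissible and $s(q,\rho)>s(w,\rho)/2$ by continuity, and running its approximation far enough makes $\underline s\ge s(w,\rho)/4$, so $V_{q,\rho}$ has radius at least $s(w,\rho)/(8M')$ and contains $w$ once also $|w-q|<s(w,\rho)/(8M')$. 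By compactness a finite subfamily already covers $\overline{\ID}_r$. Since admissibility is decidable, goodness semidecidable, and ``a given finite list of the $V$'s covers $\overline{\ID}_r$'' is decidable (emptiness of a basic dyadic semialgebraic set, by quantifier elimination), a dovetailed search --- enumerating the dyadic pairs, confirming the good ones, improving their $\underline s$, and periodically testing the current collection for covering --- halts with a finite good family $(q_1,\rho_1),\dots,(q_K,\rho_K)$ and dyadics $\underline s_1,\dots,\underline s_K$. The algorithm outputs $\hat r$ and $\varepsilon:=\min_{1\le j\le K}\underline s_j/2$. Correctness: for $w\in\ID_r$ pick $j$ with $w\in V_{q_j,\rho_j}$; then $\ID_\varepsilon(f(w))\subseteq\ID_{\underline s_j/2}(f(w))\subseteq f(\ID_{\hat r})$ by the displayed chain.

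I expect the main obstacle to be precisely the lack of injectivity of $f$. Near a zero of $f'$ the local mapping degree of $f$ is not computable --- one cannot decide whether a given higher derivative $f^{(k)}(w)$ vanishes --- so there is no closed-form lower bound for the ``depth'' $\mbox{dist}(f(w),\IC\setminus f(\ID_{\hat r}))$, and no single radius $\rho$ works for all $w$ at once, as on a fixed circle around $w$ the minimum of $|f(z)-f(w)|$ may be $0$. That is why $\varepsilon$ cannot be written down by a formula: a usable radius exists for each individual $w$ merely because zeros are isolated, and these pointwise choices are turned into one computable $\varepsilon$ only through compactness together with the semidecidability of goodness and the decidability of the covering test. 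The only routine facts I skip are the computability up to any precision of $f$ at dyadic points and of $s(q,\rho)$ (Theorem~\ref{theo_L1}, Corollary~\ref{cor_3_2}) and the semialgebraic covering test.
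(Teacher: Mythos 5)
Your proof is correct, but it takes a genuinely different route from the paper's. The paper works globally: it first searches for a single radius $\hat r\in(r,1)$ and a $\rho>0$ with $|f'(z)|\geq\rho$ on the whole circle $|z|=\hat r$ (possible since the zeros of $f'$ are isolated), then uses the computable bound $\mu_{\hat r}''$ on $f''$ to make $f$ injective on discs of radius $\Delta/2$ with $\Delta\leq\rho/(4\mu_{\hat r}'')$ centred near that circle, applies Koebe's $1/4$ theorem there to get a uniform depth $\rho\Delta/16$ for points $f(z)$ with $z$ in an annulus around $\overline r=\hat r-\Delta$, and finally propagates this depth inward to all of $\ID_r$ by the minimum modulus principle applied to $y_0-f$; the output is the closed-form $\varepsilon=\rho\Delta/16$. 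You instead argue locally at every centre: Rouch\'e on a small circle around $q$ on which $f-f(q)$ is bounded away from zero gives a local depth $s(q,\rho)$, valid on a Lipschitz neighbourhood $V_{q,\rho}$, and you assemble these into one $\varepsilon$ by compactness of $\overline{\ID}_r$ together with a dovetailed search whose halting relies on the semidecidability of goodness and the decidability (via semialgebraic emptiness) of the covering test. Your route avoids Koebe's lemma and the second-derivative bound entirely, at the price of heavier search machinery; the paper's route confines the non-effective part to locating one good circle for $f'$ and then gives $\varepsilon$ by a formula, which is closer to something implementable. Your closing intuition that $\varepsilon$ "cannot be written down by a formula" is therefore only half right: the paper does write it down, but only after an unavoidable search of the same nature as yours, just concentrated on a single circle rather than spread over a compact cover. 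Both arguments are sound; your reduction to the uniform statement $\ID_\varepsilon(f(w))\subseteq f(\ID_{\hat r})$ for $w\in\overline{\ID}_r$ is exactly the final step of the paper's proof as well.
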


\begin{proof}
As $f$ is not constant, we can compute $\rho>0$ and $\hat{r}$ such that $0<r<\hat{r}<1$ and $|f'(z)|\geq\rho$ for all $z\in\IC$ with $|z|=\hat r$. Furthermore, following the notation of Corollary 1, we can compute a dyadic upper bound  $\mu_{\hat r}''$ on the maximum of $|f''(z)|$ for $z\in \ID_{\hat r}$. Let $\Delta$ be a dyadic number with \[0 < \Delta \leq \rho/(4\mu_{\hat r}'')\mbox{ and }2\cdot \Delta < \hat r - r.\] 
Then we have $|f'(z)|\geq \rho/2$ for all $z\in \ID_{\hat{r}}\setminus\ID_{\hat r -2\Delta}$ and, with $\overline r := \hat{r} - \Delta$ we have $0<r<\overline r< \hat r <1$. By choice of $\Delta$ we do have actually more: $f$ is injective on $\ID_{\Delta/2}(z)$ for each 
$z\in \IC$ with \[\overline r -\Delta/2\leq |z|\leq \overline r+\Delta/2.\] Thus we can apply Koebe's $1/4$-Lemma to get the following claim:

\begin{claim}\label{claim_1}
For each $z\in \IC$ with $\overline r -\Delta/2 \leq |z| \leq \overline r + \Delta/2$ and each $y$ with $|f(z)-y|< \rho\cdot \Delta/16$ we have $y\in f(\ID_{\hat r})$.
\end{claim} 

Assume now, that there exists some $z_0$ with $|z_0|<\overline r - \Delta/2$ and some $y_0$ with $|y_0-f(z_0)|<\rho\cdot \Delta/16$ and 
$y_0\notin f(\ID_{\overline r})$. Then for the holomorphic function $h:\ID\rightarrow\IC$, determined by $h(z):= y_0 - f(z)$ for all $z\in\ID$, we have $h(z)\not= 0$ for all $z\in\ID_{\overline r}$, i.e. $|h|$ takes its minimum on the boundary $\partial \ID_{\overline r}=\{ z\mid |z|=\overline r\}$. Therefore there exists some $z$ with $|z|=\overline r$ and \[|h(z)|\leq |y_0 - f(z_0)|< \rho\cdot \frac{\Delta}{16}.\] Thus by Claim \ref{claim_1}\ we have $y_0\in f(\ID_{\hat r})$,
i.e. we can extend Claim \ref{claim_1}\ as follows:

\begin{claim}\label{claim_2}
For each $z\in \IC$ with $ |z| \leq \overline r + \Delta/2$ and each $y$ with $|f(z)-y|< \rho\cdot \Delta/16$ we have $y\in f(\ID_{\hat r})$.
\end{claim}

Let now $\varepsilon:=\rho\cdot \Delta/16$, $(\varepsilon,\delta,G)$ be some $\varepsilon$-covering grid of $f(\ID_{r})$ and 
$x\in \bigcup_{z\in G} \ID_{\frac{3}{4}\varepsilon}(z)$. Then there exists some $z\in G$ with $|z-x|<(3/4)\cdot \varepsilon$ and,
by condition (b) in Definition 1, there exist $y\in f(\ID_r)$ such that $|z - y|\leq (1/4)\cdot\varepsilon$.
Thus we have $|x-y|< (1/4)\cdot\varepsilon + (3/4)\cdot\varepsilon =
\rho\cdot \Delta/16$ and, by Claim \ref{claim_2}\ we have $x\in f(\ID_{\hat r})$ which proves the lemma.  
\end{proof}

Finally, we can combine the results of this section as follows:

\begin{corollary}
Given $\alpha_1\alpha_2\ldots$ in $\Pi^\infty$ and $n\in\IN$, we can compute a dyadic number $l$ such that $(1-2^{-n})\lambda\leq l\leq \lambda_f$, where $f=\int \Psi^\infty(\alpha_1\alpha_2\ldots)$.
\end{corollary}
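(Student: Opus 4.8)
The plan is to combine Lemma \ref{lemma_3_1} (which sandwiches $\lambda_f(\ID_r)$ between $r\lambda$ and $\lambda_f$), Lemma \ref{lemma_3_2} (which shows that $l(\varepsilon,\delta,G)$ sandwiches between $l(f(\ID_r))=\lambda_f(\ID_r)$ and $l(D(\varepsilon,\delta,G))$), Lemma \ref{lemma_3_3} (which produces an $\varepsilon$-covering grid of $f(\ID_r)$ computably), and Lemma \ref{lemma_3_4} (which guarantees $D(\varepsilon,\delta,G)\subseteq f(\ID_{\hat r})$ for suitably small $\varepsilon$ and suitable $\hat r<1$). First I would fix $r:=1-2^{-n-1}$, say, so that $r$ is dyadic and $1-2^{-n}\leq r^2$ — actually I need to be a little careful here: I want the final product of shrink factors to be at least $1-2^{-n}$, so I would choose $r$ close enough to $1$ and also control $\hat r$. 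Concretely: pick dyadic $\hat r<1$ with $\hat r\geq 1-2^{-n-1}$, then apply Lemma \ref{lemma_3_4} ``in reverse'' — but Lemma \ref{lemma_3_4} takes $r$ as input and outputs $\hat r$, so instead I would start from a dyadic $r$ with $1-2^{-n}\leq r$ (e.g. $r=1-2^{-n-1}$), feed it to Lemma \ref{lemma_3_4} to obtain $\varepsilon>0$ and $\hat r$ with $r<\hat r<1$, and then feed $\varepsilon,r$ to Lemma \ref{lemma_3_3} to obtain an $\varepsilon$-covering grid $(\varepsilon,\delta,G)$ of $f(\ID_r)$.

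Next I would set $l:=l(\varepsilon,\delta,G)$, which is explicitly computable from the finite data $(\varepsilon,\delta,G)$ by its defining formula $\delta+\max_{z\in G}\min_{y\in(\delta\IZ\times\delta\IZ)\setminus G}|z-y|$ (this is a finite computation since $G$ is finite and the relevant minimum is attained among lattice points within a bounded region of $G$). For the upper bound $l\leq\lambda_f$: by Lemma \ref{lemma_3_2} we have $l(\varepsilon,\delta,G)\leq l(D(\varepsilon,\delta,G))$, and by Lemma \ref{lemma_3_4} we have $D(\varepsilon,\delta,G)=\bigcup_{z\in G}\ID_{\frac34\varepsilon}(z)\subseteq f(\ID_{\hat r})\subseteq f(\ID)$, hence $l(D(\varepsilon,\delta,G))\leq l(f(\ID))=\lambda_f$. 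For the lower bound $(1-2^{-n})\lambda\leq l$: by Lemma \ref{lemma_3_2} again, $l\geq l(f(\ID_r))=\lambda_f(\ID_r)$, and by Lemma \ref{lemma_3_1} (applied with $D=\ID_r$), $\lambda_f(\ID_r)\geq r\lambda\geq(1-2^{-n})\lambda$ by the choice $r\geq 1-2^{-n}$. Chaining these gives $(1-2^{-n})\lambda\leq l\leq\lambda_f$ as required.

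The main obstacle I anticipate is bookkeeping about which quantities are genuinely computable (dyadic, finitely representable) versus real-valued, and making sure the chain of invocations respects the input/output types: Lemma \ref{lemma_3_4} needs dyadic $r$ and returns dyadic $\varepsilon,\hat r$; Lemma \ref{lemma_3_3} then needs these dyadic $\varepsilon$ and $r$ and returns the grid; and the formula for $l(\varepsilon,\delta,G)$ must be evaluated exactly over the finite set $G\subseteq\delta\IZ\times\delta\IZ$. One subtlety: the value $l(\varepsilon,\delta,G)$ involves $\min_{y\notin G}$ over an infinite lattice, so I must note that this minimum is attained — it equals $\delta$ times a small integer, bounded by the diameter of $G$ plus one step — so only finitely many lattice points need to be inspected. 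A second subtlety is simply ensuring that $r<\hat r<1$ together with $r\geq 1-2^{-n}$ are simultaneously achievable; since $1-2^{-n}<1$ strictly, any dyadic $r\in[1-2^{-n},1)$ works and Lemma \ref{lemma_3_4} then supplies a strictly larger dyadic $\hat r<1$, so there is no conflict. Apart from this type-checking, every inequality in the proof is a direct citation of a lemma in this section, so no new analytic work is needed.
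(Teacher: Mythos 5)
Your proposal is correct and follows essentially the same route as the paper's own proof: fix a dyadic $r\geq 1-2^{-n}$ (the paper simply takes $r=1-2^{-n}$), obtain $\varepsilon$ and $\hat r$ from Lemma \ref{lemma_3_4}, build the grid with Lemma \ref{lemma_3_3}, and chain $(1-2^{-n})\lambda\leq r\lambda\leq\lambda_f(\ID_r)\leq l(\varepsilon,\delta,G)\leq l(D(\varepsilon,\delta,G))\leq\lambda_f(\ID_{\hat r})\leq\lambda_f$ exactly as in the text. Your type-checking remarks (dyadic inputs/outputs, finiteness of the minimum in $l(\varepsilon,\delta,G)$) are harmless additions that do not change the argument.
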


\proof
Given some $n\in\IN$ we can compute by Lemma \ref{lemma_3_4}\ first some $\varepsilon>0$ and $\hat r<1$ with $\hat{r}>1-2^{-n}$ 
such that for any $\varepsilon$-covering grid $(\varepsilon,\delta,G)$ of
$f(\ID_{1-2^{-n}})$ we have $D(\varepsilon,\delta,G)\subseteq f(\ID_{\hat r})$. Then, by Lemma \ref{lemma_3_3}\ we can compute an $\varepsilon$-covering grid $(\varepsilon,\delta,G)$ of $f(\ID_{1-2^{-n}})$
and compute $l:=l(\varepsilon,\delta,G)$. Finally, by Lemma \ref{lemma_3_2}\ and Lemma \ref{lemma_3_1}\ we get
\[ (1-2^{-n})\cdot \lambda\leq \lambda_f(\ID_{1-2^{-n}})\leq l\leq l(D(\varepsilon,\delta,G))\leq \lambda_f(\ID_{\hat r})\leq \lambda_f.\eqno{\qEd}
\]

\section{The Main Theorem}\label{sec_3_4}

The proof of our main theorem can now be simply done by covering the space of $\lambda$-bounding
functions by neighbourhoods given by the algorithm of the previous section.

\begin{theorem}\label{theo_3_1}
We can compute approximations to the Landau constant up to any precision.
\end{theorem}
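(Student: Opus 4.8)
The plan is to combine the single-function approximation of the previous section (the final Corollary, which on input $\alpha\in\Pi^\infty$ and $n$ produces a dyadic $l$ with $(1-2^{-n})\lambda_{\ldots}\le l\le\lambda_f$ for $f=\int\Psi^\infty(\alpha)$) with the compactness of $\Pi^\infty$ and the fact, from Corollary \ref{cor_3_2}(a), that $\lambda=\inf\{\lambda_{\int f}\mid f\in\Psi^\infty(\Pi^\infty)\}$. Roughly: for each input $n$ we want a dyadic $q$ with $|q-\lambda|<2^{-n}$. The lower-bound side is easy, since every $l$ we compute satisfies $l\le\lambda_f$ and hence $\lambda\le\lambda_f$; the content is to get an \emph{upper} bound on $\lambda$ that is within $2^{-n}$, i.e.\ to exhibit finitely many functions whose computed $l$-values have infimum close to $\lambda$, and to be sure we have not missed a function with much smaller $\lambda_f$.

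First I would fix $n$ and choose $m$ large enough (depending on $n$, using the explicit bounds $\mu_r',\mu_r''$ from Corollary \ref{cor_3_2}(b) and the modulus-of-continuity estimates implicit in Lemmas \ref{lemma_3_3} and \ref{lemma_3_4}) so that the whole pipeline of Section \ref{sec_3_3}, run with parameter $n+2$ say, depends on the input sequence $\alpha$ only through its first $m$ symbols $\alpha_1\ldots\alpha_m$: the chosen $\rho$, $\hat r$, $\Delta$, $\varepsilon$, $\delta$, the grid $G_D$, and the dyadic approximations $d_z$ to $f(z)$ for $z\in G_D$ can all be computed to the required accuracy from a finite prefix of $\alpha$, because $f(z)$ and $f'(z)$, $f''(z)$ on $\overline\ID_{\hat r}$ are determined up to $2^{-k}$ by finitely many coefficients $a_1,\ldots,a_N$, each of which is in turn determined up to $2^{-k}$ by finitely many symbols of $\alpha$ (this is exactly the effectivity provided by Theorem \ref{theo_L1} / Corollary \ref{cor_3_2}). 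Hence there is a continuous, indeed locally constant, map $\alpha\mapsto l(\alpha)$ with $(1-2^{-(n+2)})\lambda_{\int\Psi^\infty(\alpha)}\le l(\alpha)\le\lambda_{\int\Psi^\infty(\alpha)}$, and by the compactness of $\Pi^\infty$ (with the product topology, as recalled in Section \ref{sec_prel}) the map $l$ takes only finitely many values and attains them on a finite clopen partition of $\Pi^\infty$ by cylinders $w\Pi^\infty$, $w\in\Pi^m$.

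Next I would have the algorithm enumerate all $4^m$ words $w\in\Pi^m$, run the Section \ref{sec_3_3} procedure with precision parameter $n+2$ on the sequence $w000\ldots$ (or any fixed completion), obtaining for each $w$ a dyadic $l_w$, and output $q:=\min_{w}l_w$. To see this works, note that on one hand $q=l_{w_0}$ for some $w_0$, so $q\le\lambda_{\int\Psi^\infty(w_0000\ldots)}$, hence $\lambda\le\lambda_{\int\Psi^\infty(w_0000\ldots)}$ gives $\lambda\le q$ is the wrong direction --- rather, directly $\lambda=\inf_\alpha\lambda_{\int\Psi^\infty(\alpha)}\le\lambda_{\int\Psi^\infty(w_0000\ldots)}$ need not bound $q$; instead I argue $q\ge\lambda$ from $q=l_{w_0}\le\lambda_{\int\Psi^\infty(w_0000\ldots)}$ being false again, so let me be careful: the correct chain is $\lambda\le\lambda_{\int\Psi^\infty(w_0000\ldots)}$ and $l_{w_0}\le\lambda_{\int\Psi^\infty(w_0000\ldots)}$, neither of which alone gives $q\ge\lambda$. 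The point is the other inclusion: for \emph{every} $\alpha$ we have $l(\alpha)\ge(1-2^{-(n+2)})\lambda_{\int\Psi^\infty(\alpha)}\ge(1-2^{-(n+2)})\lambda$, and since $l$ depends only on the length-$m$ prefix, $l_w=l(\alpha)\ge(1-2^{-(n+2)})\lambda$ for all $w$, so $q\ge(1-2^{-(n+2)})\lambda$, i.e.\ $q\ge\lambda-2^{-(n+2)}\lambda>\lambda-2^{-n}$. Conversely, by Corollary \ref{cor_3_2}(a) and the definition of infimum there is $\alpha^\ast$ with $\lambda_{\int\Psi^\infty(\alpha^\ast)}<\lambda+2^{-(n+2)}$; taking $w^\ast$ its length-$m$ prefix we get $q\le l_{w^\ast}=l(\alpha^\ast)\le\lambda_{\int\Psi^\infty(\alpha^\ast)}<\lambda+2^{-(n+2)}<\lambda+2^{-n}$. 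Together $|q-\lambda|<2^{-n}$, and every step --- enumerating $\Pi^m$, running a finite-word algorithm $4^m$ times, taking a minimum of dyadics --- is performed by a classical Turing machine on finite words, as required.

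The main obstacle is the reduction in the second paragraph: making precise and quantitative the claim that a prescribed output precision of the Section \ref{sec_3_3} algorithm is achieved already from a finite prefix of $\alpha$, uniformly in $\alpha$. This requires (i) an explicit modulus for how many power-series coefficients control $f,f',f''$ on $\overline\ID_{\hat r}$ to within $2^{-k}$ --- available from the tail bounds $\sum_{n>N}m_n r^n$ and $\sum_{n>N} n\,m_n r^{n-1}$ estimated in Section \ref{sec_3_2} --- composed with (ii) the continuity/effectivity of $\Psi^\infty$, namely that fixing the first $m$ symbols of $\alpha$ pins down each relevant coefficient $a_j$ to within a prescribed tolerance (here one must track the interleaving schedule $t_1,t_2,\ldots$: to nail down $a_j$ to within $2^{-k}$ one needs roughly the first $k$ symbols of the $j$-th subsequence, hence a prefix of $\alpha$ whose length grows with $j$ and $k$ but is finite and computable). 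Everything else --- the compactness extraction of a finite clopen cover, the two-sided estimate on $q$, and the observation that the final procedure never leaves the realm of finite-word computation --- is then routine.
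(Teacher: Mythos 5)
Your overall architecture coincides with the paper's: cover $\Pi^\infty$ by finitely many cylinders, run the single-function algorithm of Section \ref{sec_3_3} once per cylinder, take the minimum, and verify a two-sided estimate (your chain $q\ge(1-2^{-(n+2)})\lambda$ together with $q\le l(\alpha^\ast)\le\lambda_{\int\Psi^\infty(\alpha^\ast)}<\lambda+2^{-(n+2)}$ is the same bookkeeping as the paper's $(1-2^{-n})\lambda\le\inf_\omega l(\omega)\le\lambda$, modulo the slip of using the completion $w000\ldots$ when $0\notin\Pi=\{1,2,3,4\}$). The gap is in how the cylinder length is obtained. You propose to \emph{compute $m$ in advance} from the explicit bounds $\mu_r',\mu_r''$ and moduli of continuity; you correctly flag this as the main obstacle, but then treat it as a routine composition of tail estimates, which it is not. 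The pipeline of Section \ref{sec_3_3} is not controlled by the explicit coefficient bounds alone: in Lemma \ref{lemma_3_4} the algorithm \emph{searches} for $\hat r$ and $\rho>0$ with $|f'(z)|\ge\rho$ on $|z|=\hat r$, and there is no a priori computable lower bound on the achievable $\rho$ that is uniform over all $f$ in the class; consequently no a priori bound on $\Delta$, $\varepsilon$, $\delta$, the grid, or the precision to which the values $d_z$ must be computed, and therefore no a priori bound on the prefix length $m$. A uniform $m$ does exist by compactness (your locally-constant-map observation), but existence is not computability, and your argument supplies no way to find it.

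The paper resolves exactly this point by a dynamic device your proposal lacks: run the oracle-machine version of Corollary 2 on the inputs $w1^\infty$, $w\in\Pi^t$, for $t=0,1,2,\ldots$, and check \emph{at runtime} whether any of these computations queried a symbol beyond position $t$; if so, increment $t$ and restart, otherwise output $\min_w l(w1^\infty)$. Correctness is then immediate (no computation distinguishes $w1^\infty$ from $w\nu$), and compactness of $\Pi^\infty$ is invoked only to prove that some $t$ works, i.e.\ that the loop terminates --- it never has to be made quantitatively effective. Replacing your a priori choice of $m$ by this query-detection loop turns your sketch into the paper's proof.
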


\begin{proof} 
Let $n\in\IN$ be given. We proceed in steps $t=0,1,\ldots$ as follows. For each $t$ we 
consider the set \[W_t:=\{ w1^\infty\mid w\in \Pi^t\},\] i.e. the set of infinite words starting with an
arbitrary (finite) word followed by infinitely many 1. Now we apply the algorithm given in Corollary 2 to each word 
$\omega$ in $W_t$ and receive an value $l(\omega)$ such that \[(1-2^{-n})\lambda\leq l(\omega)\leq \lambda_{\int \Psi^{\infty}(\omega)}.\]
If for any $\omega\in W_t$ information on any symbol outside the leading $t$ symbols of $\omega$ are asked by the algorithm, then
we go to the next step $t+1$. Otherwise $l:=\inf_{\omega\in W_t} l(\omega)$ is an appropriate approximation of $\lambda$, i.e.
$(1-2^{-n})\lambda\leq l\leq\lambda$.

The latter is obvious, because, as none of the algorithms asks any information outside the leading words $w\in\Pi^t$, the
results on $w1^\infty$ equals the results on $w\nu$ for all $\nu\in\Pi^\infty$. Thus $l$ is indeed the infimum over all approximations 
to $\lambda_{\int f}$ of all $f\in\Psi^\infty(\Pi^\infty)$.

It remains, therefore, to prove that for each $n$ there does indeed exist some $t$ such that our algorithm stops after this step.
This can be seen as follows: As the algorithm of Corollary 2 stops on all $\alpha\in\Pi^\infty$, there exist for
each $\alpha$ some $n(\alpha)$ such that no information is asked by the algorithm on any symbol beside the leading $n(\alpha)$ symbols
of $\alpha$. Thus the computation of the algorithm is identical for all words which coincides with $\alpha$ on the first $n(\alpha)$
symbols. This means that for each $\alpha$ there exists an open neighbourhood of $\alpha$ in $\Pi^\infty$ such that the algorithm
is identical on any word in this neighbourhood. As $\Pi^\infty$ is compact, there is a finite sub-covering of $\Pi^\infty$ by such
neighbourhoods. Let $\alpha^1$,...,$\alpha^m$ be the corresponding words. Then the algorithm will stop in step \[t:=\max \{ n(\alpha^i)\mid 1\leq i\leq m\}\] or before.   
\end{proof}

The most interesting open problem is whether the conjectures on the Landau and Bloch constant hold. 
If so, the constant can clearly be computed in polynomial time. To this end, our algorithm and the algorithm given in \cite{rett08}\ 
can present holomorphic functions which are very near the optimum for $\beta$- and $\lambda$-values, thus
giving possibly new insights on the kind of functions involved.

Concerning our algorithm the main intriguing problem is to improve the complexity bound, which is
roughly double exponential, to an acceptable running time. The problem on reducing the time
complexity of our algorithm is that the functions we have to consider can explode when reaching the
boundary $\partial\ID$, where evaluation can be quite expensive.\\[3cm]

%\section*{Acknowledgements}

\end{document}